\newtheorem {thm}{Theorem}
\newtheorem* {thm*}{Theorem}
\newtheorem {quest}[thm]{Question}
\newtheorem {lem}[thm]{Lemma}
\theoremstyle{definition}
\DeclareMathOperator{\End}{End}
\DeclareMathOperator{\Hom}{Hom}
\title{On the problem of detecting linear dependence for products of abelian varieties and tori}
\author{Antonella Perucca}
\date{}
\begin{document}
\maketitle

\begin{abstract}
Let $G$ be the product of an abelian variety and a torus defined over a number field $K$.
Let $R$ be a point in $G(K)$ and let $\Lambda$ be a finitely generated subgroup of $G(K)$. Suppose that for all but finitely many primes $\mathfrak p$ of $K$ the point $(R \bmod \mathfrak{p})$ belongs to $(\Lambda \bmod \mathfrak{p})$. Does it follow that $R$ belongs to $\Lambda$?
We answer this question affirmatively in three cases: if $\Lambda$ is cyclic; if $\Lambda$ is a free left $\End_K G$-submodule of $G(K)$; if $\Lambda$ has a set of generators (as a group) which is a basis of a free left $\End_K G$-submodule of $G(K)$.
In general we prove that there exists an integer $m$ (depending only on $G$, $K$ and the rank of $\Lambda$) such that $mR$ belongs to the left $\End_K G$-submodule of $G(K)$ generated by $\Lambda$.
\end{abstract}

\section{Introduction}\label{introlindep}

The problem of detecting linear dependence investigates whether the property for a rational point to belong to a subgroup obeys a local-global principle.

\begin{quest}
Let $G$ be the product of an abelian variety and a torus defined over a number field $K$.
Let $R$ be a point in $G(K)$ and let $\Lambda$ be a finitely generated subgroup of $G(K)$. Suppose that for all but finitely many primes $\mathfrak p$ of $K$ the point $(R \bmod \mathfrak{p})$ belongs to $(\Lambda \bmod \mathfrak{p})$. Does $R$ belong to $\Lambda$?
\end{quest}

We answer this question affirmatively in three cases: if $\Lambda$ is cyclic; if $\Lambda$ is a free left $\End_K G$-submodule of $G(K)$; if $\Lambda$ has a set of generators (as a group) which is a basis of a free left $\End_K G$-submodule of $G(K)$.
In general we prove that there exists an integer $m$ (depending only on $G$, $K$ and the rank of $\Lambda$) such that $mR$ belongs to the left $\End_K G$-submodule of $G(K)$ generated by $\Lambda$.\newline

The problem of detecting linear dependence was first formulated by Gajda in 2002 in a letter to Ken Ribet.
Papers and preprints concerning this problem are: \cite{Schinzelpower}, \cite{Kharegalois}, \cite{Weston}, \cite{Kowalskikummer}, \cite{BGKdetecting}, \cite{Banaszaklindep}, \cite{Baranczuk08}.

We now give the state of the art of the problem of detecting linear dependence for abelian varieties.

\begin{itemize}
\item The strongest result is by Weston in \cite{Weston}:
if the abelian variety has commutative endomorphism ring then there exists a $K$-rational torsion point $T$ such that $R+T$ belongs to $\Lambda$. Since the torsion of the Mordell-Weil group is finite, Weston basically solved the problem for abelian varieties with commutative endomorphism ring. It is not known how to get rid of this torsion point. 

\item If the endomorphism ring of the abelian variety is not commutative, we are able to prove the following:
there exists a non-zero integer $m$ (depending only on $G$ and $K$) such that $mR$ belongs to the $\End_K G$-submodule of $G(K)$ generated by $\Lambda$, see Theorem~\ref{lindepthm}.

\item We solve the problem of detecting linear dependence in the case where $\Lambda$ is a free $\End_K G$-submodule of $G(K)$ or if $\Lambda$ has a set of generators (as a group) which is a basis of a free $\End_K G$-submodule of $G(K)$.
With an extra assumption on the point $R$ (that $R$ generates a free left $\End_K G$-submodule of $G(K)$), these two results are respectively proven by Gajda and G\'ornisiewicz in \cite[Theorem B]{GajdaGornisiewicz} and by Banaszak in \cite[Theorem 1.1]{Banaszaklindep}.
We remove the assumption on the point $R$ in Theorem~\ref{lindepthm} and in Theorem~\ref{banstronger} respectively.

\item If $\Lambda$ is cyclic, we solve the problem of detecting linear dependence. This result was known only for elliptic curves, see \cite[Theorem 3.3]{Kowalskikummer} by Kowalski.

\item Gajda and G\'ornisiewicz in \cite{GajdaGornisiewicz} use the theory of integrally-semisimple Galois modules to study the problem of detecting linear dependence. This theory was completely developed by Larsen in \cite{Larsenwhitehead}. Gajda and G\'ornisiewicz prove the following result (\cite[Theorem A]{GajdaGornisiewicz}):

Let $\ell$ be a prime such that $T_\ell (G)$ is integrally semisimple, let $\hat{\Lambda}$ be a free $\End_K G \otimes \mathbb Z_\ell$-submodule of $G(K)\otimes \mathbb Z_\ell$ and let $\hat{R}$ in $G(K)\otimes \mathbb Z_\ell$ generate a free $\End_K G \otimes \mathbb Z_\ell$-submodule of $G(K)\otimes \mathbb Z_\ell$. Then $\hat{R}$ belongs to $\hat{\Lambda}$ if and only if for all but finitely many primes $\mathfrak p$ of $K$ $(\hat{R} \bmod \mathfrak p)$ belongs to $(\hat{\Lambda} \bmod \mathfrak p)$. If $\End_K G \otimes \mathbb Z_\ell$ is a maximal order in $\End_K G \otimes \mathbb Q_\ell$, the condition on $\hat{\Lambda}$ can be replaced by the following: $\hat{\Lambda}$ is torsion-free over $\End_K G \otimes \mathbb Z_\ell$.
\end{itemize}

Now we list further results on the problem of detecting linear dependence for commutative algebraic groups.
Schinzel in \cite[Theorem 2]{Schinzelpower} solved the problem of detecting linear dependence for the multiplicative group.
A generalization of Schinzel's result (Lemma~\ref{banladic} for the multiplicative group with no conditions on $\Lambda$) was proven by Khare in \cite[Proposition 3]{Kharegalois}.


Kowalski in \cite{Kowalskikummer} studied the problem of detecting linear dependence in the case where $\Lambda$ is cyclic. In particular he investigates for which commutative algebraic groups one can solve the problem of detecting linear dependence. This is not possible whenever the additive group is embedded into $G$, see \cite[Proposition 3.2]{Kowalskikummer}. Hence it is left to treat the case of semi-abelian varieties.

Finally, a variant of the problem of detecting linear dependence was considered by Bara\'nczuk in \cite{Baranczuk08} for the multiplicative group and abelian varieties with endomorphism ring $\mathbb Z$.

\section{Preliminaries}\label{sectionpreliminaries}

Let $G$ be the product of an abelian variety and a torus defined over a number field $K$. Let $R$ be a $K$-rational point on~$G$ and call $G_R$ the smallest algebraic $K$-subgroup of $G$ containing $R$. 
Write $G_R^0$ for the connected component of the identity of $G_R$ and write $n_R$ for the number of connected components of $G_R$.
By \cite[Proposition 5]{Peruccaorder}, $G^0_R$ is the product of an abelian variety and a torus defined over $K$.

 We say that $R$ is \textit{independent} if $R$ is non-zero and $G_R=G$.
The point $R$ is independent in $G$ if and only if $R$ is independent in $G\times_K \bar{K}$. Furthermore, $R$ is independent in $G$ if and only if $R$ is non-zero and the left $\End_K G$-submodule of $G(K)$ generated by $R$ is free. See \cite[Section 2]{Peruccaorder}.

\begin{lem}\label{dimGdR}
Let $R$ be a $K$-rational point on~$G$ and let $d$ be a non-zero integer. We have $G^0_{dR}= G^0_R$. In particular, the dimension of  $G_{dR}$ equals the dimension of $G_{R}$ and $G_{n_RR}=G_R^0$.
\end{lem}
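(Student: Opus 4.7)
The plan is to prove the equality $G_{dR}^0 = G_R^0$ by separately establishing the two inclusions, and then to deduce the remaining statements. For the inclusion $G_{dR}^0 \subseteq G_R^0$, I would argue directly from minimality: since $R \in G_R$, also $dR \in G_R$, so the algebraic subgroup $G_{dR}$ is contained in $G_R$. Its identity component $G_{dR}^0$ is then a connected closed $K$-subgroup of $G_R$ containing the identity, and hence lies in $G_R^0$.

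For the opposite inclusion $G_R^0 \subseteq G_{dR}^0$, the heart of the argument is to show that $G_{dR}$ has finite index in $G_R$. I would pass to the quotient algebraic $K$-group $G_R/G_{dR}$, which is well-defined since $G$ is commutative, and examine the image $\bar R$ of $R$. Because $d\bar R = \overline{dR} = 0$, the point $\bar R$ is torsion of order dividing $d$, so the cyclic subgroup $\langle \bar R \rangle$ it generates is finite. The minimality of $G_R$ forces $\langle \bar R \rangle = G_R/G_{dR}$: otherwise the preimage in $G_R$ of $\langle \bar R \rangle$ would be a proper algebraic $K$-subgroup of $G_R$ still containing $R$, contradicting the definition of $G_R$. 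Hence $G_R/G_{dR}$ is finite, so $G_{dR}$ is a closed subgroup of finite index in $G_R$; such a subgroup contains $G_R^0$, and since $G_R^0$ is connected it lies inside $G_{dR}^0$.

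Combining the two inclusions gives $G_{dR}^0 = G_R^0$, from which the equality $\dim G_{dR} = \dim G_R$ is immediate. For the last assertion, specializing to $d = n_R$ yields $G_{n_R R}^0 = G_R^0$; on the other hand, $R$ has order exactly $n_R$ in the finite quotient $G_R/G_R^0$ (by the same minimality argument applied to $R$ itself in $G_R/G_R^0$), so $n_R R \in G_R^0$ and hence by minimality $G_{n_R R} \subseteq G_R^0$. The reverse inclusion $G_R^0 = G_{n_R R}^0 \subseteq G_{n_R R}$ is automatic, giving $G_{n_R R} = G_R^0$.

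The only delicate point I anticipate is the formal manipulation of $G_R/G_{dR}$ as an algebraic $K$-group and the assertion that a closed subgroup of finite index in an algebraic group contains the identity component; both are standard, and no specific structure of abelian varieties or tori beyond commutativity is actually used. The rest of the argument is purely group-theoretic, hinging on the minimality that defines $G_R$ and the elementary fact that the group generated by a torsion point is finite.
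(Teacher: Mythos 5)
Your proof is correct, but its key step differs from the paper's. Both arguments get $G^0_{dR}\subseteq G^0_R$ from $G_{dR}\subseteq G_R$ in the same way; for the reverse inclusion the paper applies the minimality of $G_R$ to the subgroup $[d]^{-1}G_{dR}$, concluding that $[d]$ maps $G_R$ into $G_{dR}$, and then compares dimensions (since $[d]$ has finite kernel on a product of an abelian variety and a torus, $\dim G_R=\dim [d](G_R)\le\dim G_{dR}$, and two nested connected subgroups of equal dimension coincide). You instead apply minimality to $\pi^{-1}\bigl(\langle \bar R\rangle\bigr)$ for the quotient map $\pi\colon G_R\to G_R/G_{dR}$, deduce that the quotient is finite, and invoke the fact that a closed subgroup of finite index contains the identity component. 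The two uses of minimality are close cousins, but your version trades the isogeny property of $[d]$ for the formal machinery of quotients of commutative algebraic groups; as you note, this makes the argument independent of the special structure of $G$, and it delivers directly the slightly stronger containment $G_R^0\subseteq G_{dR}$ rather than only an equality of dimensions. Your derivation of $G_{n_RR}=G_R^0$ --- showing that the image of $R$ generates the component group, so that $n_RR\in G_R^0$, and then combining the two minimality inclusions --- spells out a step the paper leaves implicit, and is correct.
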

\textit{Proof.}
Since $G_R$ contains $dR$ we have $G_{dR}\subseteq G_R$ and so  $G^0_{dR}\subseteq G^0_R$. Hence it suffices to prove that $G^0_{dR}$ and $G^0_{R}$ have the same dimension. Clearly the dimension of $G^0_{dR}$ is less than or equal to the dimension of $G^0_R$. To prove the other inequality it suffices to show that the multiplication by $[d]$ maps $G_R$ into $G_{dR}$. This is true because $[d]^{-1}G_{dR}$ contains $R$.
\hfill$\square$\newline

Call $W$ the connected component of $G_R$ containing $R$ and let $X$ be a torsion point in $G_R(\bar{K})$ such that $W=X+G^0_R$ (see \cite[Lemma 1]{Peruccaorder}). Clearly $n_RX$ is the least positive multiple of $X$ belonging to $G^0_R$ and the connected components of $G_R$ are of the form $aX+G^0_R$ for $0\leq a<n_R$. 
We can write $R=X+Z$ where $Z$ is in $G^0_R(\bar{K})$. Since $R$ and $Z$ have a common multiple, from Lemma~\ref{dimGdR} it follows that $Z$ is independent in $G^0_R$. 

\begin{lem}\label{cosetXmultiple}
Let $L$ be a finite extension of $K$ where $X$ is defined.
Then for all but finitely many primes $\mathfrak q$ of $L$ the point $(n_R X \bmod\mathfrak q)$ is the least multiple of $(X \bmod\mathfrak q)$ belonging to $(G_R^0 \bmod\mathfrak q)$.
\end{lem}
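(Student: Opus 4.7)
The plan is to pass to the finite étale quotient $G_R/G_R^0$ and use the injectivity of reduction modulo primes on torsion of order coprime to the residue characteristic.

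Writing $\pi\colon G_R\to G_R/G_R^0$ for the natural projection, the target is a finite étale group scheme over $K$ of order $n_R$. By the very definition of $n_R$, the image $\pi(X)$ is an $L$-rational point of exact order $n_R$. The assertion of the lemma is that for almost all primes $\mathfrak q$ of $L$ and every $1\leq a< n_R$, the point $(aX\bmod\mathfrak q)$ fails to lie in $(G_R^0\bmod\mathfrak q)$; equivalently, $(\pi(X)\bmod\mathfrak q)$ still has order $n_R$ in the reduction of $G_R/G_R^0$ modulo $\mathfrak q$.

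Next I would introduce the finite set $S$ of primes of $L$ consisting of those primes that either lie above a prime of bad reduction of $G$ or whose residue characteristic divides $n_R$. For $\mathfrak q\notin S$, both $G_R^0$ and $G_R$ admit smooth group-scheme models over $\mathcal O_{L,\mathfrak q}$ with $G_R^0$ sitting inside $G_R$ as the identity component, and the quotient $G_R/G_R^0$ extends to a finite flat group scheme of order $n_R$ over $\mathcal O_{L,\mathfrak q}$. Since $n_R$ is invertible in the residue field, this extension is étale, so reduction is injective on its $\bar L$-points. Applied to $\pi(X)$, this gives that its reduction still has order $n_R$, so $n_R X$ is indeed the least positive multiple of $X$ whose reduction lies in $(G_R^0\bmod\mathfrak q)$.

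The main technical point is to justify cleanly that $G_R/G_R^0$ really spreads out to a finite étale group scheme over $\mathcal O_{L,\mathfrak q}$ for each $\mathfrak q\notin S$: once this standard but slightly delicate spreading-out argument is in place, the rest of the proof is just the usual fact that the prime-to-residue-characteristic part of the torsion injects under reduction.
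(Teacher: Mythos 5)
Your argument is correct, but it proceeds by a genuinely different mechanism than the paper's. The paper stays entirely at the level of torsion points: setting $x=\ord(X)$ and enlarging $L$ so that $G_R[x]\subseteq G_R(L)$, it argues by contradiction that if $(dX\bmod\mathfrak q)$ landed in $(G_R^0\bmod\mathfrak q)$ for infinitely many $\mathfrak q$ with $n_R\nmid d$, then --- using that reduction is injective on $G_R[x]$ for almost all $\mathfrak q$ and, by a lemma of Kowalski, that $(G_R^0\bmod\mathfrak q)[x]$ is exactly the image of $G_R^0[x]$ for almost all $\mathfrak q$ --- one would conclude $dX\in G_R^0[x]$, contradicting the minimality of $n_R$. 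Your route packages the same information into the component group $\pi\colon G_R\to G_R/G_R^0$ and its spreading out to a finite \'etale quotient over $\mathcal O_{L,\mathfrak q}$; this is conceptually cleaner and yields the statement directly for all $\mathfrak q$ outside an explicit bad set rather than by contradiction, at the price of the spreading-out step you flag. One caution: of your claimed ``equivalence'' only one implication is needed, and it is the cheap one. To rule out $(aX\bmod\mathfrak q)\in(G_R^0\bmod\mathfrak q)$ for $n_R\nmid a$ you only need the inclusion $(G_R^0\bmod\mathfrak q)\subseteq\ker\bar\pi$ (which follows because $\pi$ kills $G_R^0$ generically and the closure of $G_R^0$ in the integral model is flat) together with injectivity of reduction on the \'etale quotient applied to the $L$-rational point $\pi(X)$ of exact order $n_R$; the reverse containment $\ker\bar\pi\subseteq(G_R^0\bmod\mathfrak q)$, i.e.\ exactness of the component-group sequence on special fibers, is the genuinely delicate part of the spreading out and, happily, is not required. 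With that reading your proof is complete; the paper's version achieves the same control of how $(G_R^0\bmod\mathfrak q)$ sits inside $(G_R\bmod\mathfrak q)$ by citing Kowalski's surjectivity-on-torsion lemma instead of invoking integral models of the quotient.
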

\begin{proof}
Call $x$ the order of $X$. We may assume that the points in $G_R[x]$ are defined over $L$.
Suppose that $d$ is an integer not divisible by $n_R$ such that for infinitely many primes $\mathfrak q$ of $L$ the point $(dX \bmod \mathfrak p)$ belongs to $(G_R^0 \bmod\mathfrak q)$.
Up to excluding finitely many primes $\mathfrak q$, we may assume that the reduction modulo $\mathfrak q$ maps injectively $G_R[x]$ to $(G_R \bmod \mathfrak q)[x]$. By \cite[Lemma 4.4]{Kowalskikummer} we may also assume that  the reduction modulo $\mathfrak q$ maps surjectively $G^0_R[x]$ onto $(G^0_R \bmod \mathfrak q)[x]$.
Then for infinitely many primes $\mathfrak q$ the point $(dX \bmod\mathfrak q)$ belongs to the reduction modulo $\mathfrak q$ of the finite group $G^0_R[x]$. We deduce that $dR$ belongs to $G^0_R[x]$. We have a contradiction since $n_RX$ is the least positive multiple of $X$ which belongs to $G_R^0$.
\end{proof}

\begin{lem}\label{zerohom}
Let $A$ and $T$ be respectively an abelian variety and a torus defined over a number field $K$.
Then $\Hom_{\bar{K}}(A,T)=\{0\}$ and $\Hom_{\bar{K}}(T,A)=\{0\}$.
\end{lem}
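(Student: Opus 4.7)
\medskip

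\noindent\textbf{Proof proposal.} Both statements are classical; the plan is to separate them and to reduce to $\bar{K}$-rational situations where the standard structure theorems for tori and abelian varieties apply.

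For $\Hom_{\bar{K}}(A,T)=\{0\}$, the plan is to observe that $A$ is projective (hence complete) while $T$ is affine. Given a homomorphism $\phi\colon A\to T$ defined over $\bar{K}$, the image $\phi(A)$ is a closed subvariety of $T$ (since $A$ is complete, the morphism is closed) that is simultaneously complete (as a continuous image of the complete variety $A$) and affine (as a closed subvariety of $T$). A variety that is at once complete and affine over an algebraically closed field is zero-dimensional, so $\phi(A)$ is a finite set of points; its connectedness forces it to be a single point, and since $\phi$ is a group homomorphism that point is the identity. Hence $\phi=0$.

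For $\Hom_{\bar{K}}(T,A)=\{0\}$, the idea is to reduce to the one-parameter case. Over $\bar{K}$ we may write $T\cong \mathbb{G}_m^n$, and any homomorphism $\psi\colon \mathbb{G}_m^n\to A$ splits as the sum of its restrictions $\psi_i$ to the $n$ coordinate subtori $\mathbb{G}_m\hookrightarrow\mathbb{G}_m^n$, because $A$ is commutative. It therefore suffices to show that every homomorphism $\mathbb{G}_m\to A$ is trivial. Given such a homomorphism, the key step is that a rational map from a smooth variety to an abelian variety extends to a morphism; applied to $\mathbb{G}_m\subset\mathbb{P}^1$ this yields a morphism $\mathbb{P}^1\to A$, which must be constant since $A$ contains no rational curves (e.g.\ the pullback of an invariant differential on $A$ would be a non-zero regular differential on $\mathbb{P}^1$, contradicting $H^0(\mathbb{P}^1,\Omega^1)=0$). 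Being a homomorphism at the identity, the constant must be $0$.

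Neither step is really an obstacle: the potentially delicate point is the extension of $\mathbb{G}_m\to A$ to $\mathbb{P}^1\to A$, but this is a standard consequence of the fact that abelian varieties admit no rational indeterminacy (the classical ``extension theorem'' for rational maps into abelian varieties from smooth varieties). I would simply cite this, together with the complete-and-affine-implies-finite fact, rather than reprove them.
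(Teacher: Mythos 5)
Your proposal is correct. The first half coincides with the paper's argument: the paper disposes of $\Hom_{\bar K}(A,T)$ in one line by the same complete-versus-affine observation that you spell out in detail. For $\Hom_{\bar K}(T,A)$ you and the paper both reduce to a single homomorphism $\mathbb{G}_m\to A$, but then diverge. The paper analyses the \emph{image}: a non-zero homomorphism $\phi\colon\mathbb{G}_m\to A$ has closed connected one-dimensional image, which is therefore an elliptic curve $E$, and the resulting dominant map from a rational curve to $E$ is excluded by the Hurwitz formula. You instead analyse the \emph{source}: you extend $\mathbb{G}_m\to A$ to a morphism $\mathbb{P}^1\to A$ (for a smooth curve mapping to a proper variety this is just the valuative criterion, so you do not even need the full extension theorem for rational maps into abelian varieties) and then kill it by pulling back an invariant differential, using $H^0(\mathbb{P}^1,\Omega^1)=0$; this is valid in characteristic zero, which is the setting here. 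The two mechanisms are cousins --- both ultimately say that a genus-zero curve cannot dominate a positive-genus target --- but yours has the small advantage of not needing to identify the image as an elliptic curve, while the paper's avoids invoking the extension step. Either route is complete and citable; there is no gap in what you wrote.
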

\begin{proof}
Since $A$ is a complete variety and $T$ is affine, there are no non-trivial morphisms from $A$ to $T$.
To prove the other equality, suppose that $\phi$ is a non-zero morphism from $\mathbb G_m$ to $A$. Then $\phi(\mathbb G_m)$ is connected and has dimension $1$. We deduce that $\phi$ is an isogeny from $\mathbb G_m$ to an elliptic curve. This is impossible by the Hurwitz formula (\cite[Chapter II Theorem 5.9]{Silvermanecbook}).
\end{proof}

\begin{lem}
Let $R$ be a commutative ring with $1$. Let $F$ be a free $R$-module. Suppose that $s$ is an $R$-endomorphism of $F$ sending every element to a multiple of itself. Then $s$ is a scalar.
\end{lem}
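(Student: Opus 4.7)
The plan is to fix a basis $\{e_i\}_{i\in I}$ of $F$ and show that the scalars associated with each basis element must all coincide. First, handle the trivial cases $F=0$ and $F$ of rank $1$ (where any $R$-linear endomorphism is automatically scalar multiplication by the image of a generator). So assume $|I|\geq 2$.

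By hypothesis, for each $i\in I$ there exists $r_i\in R$ with $s(e_i)=r_i e_i$. The key step is to compare the two ways of computing $s(e_i+e_j)$ for $i\neq j$: on the one hand, $R$-linearity gives
\[
s(e_i+e_j)=s(e_i)+s(e_j)=r_i e_i + r_j e_j,
\]
while on the other hand the hypothesis applied to the element $e_i+e_j$ yields some $r_{ij}\in R$ with
\[
s(e_i+e_j)=r_{ij}(e_i+e_j)=r_{ij}e_i+r_{ij}e_j.
\]
Since $\{e_i,e_j\}$ is part of a basis (hence $R$-linearly independent), the coefficients of $e_i$ and $e_j$ in these two expressions must match, giving $r_i=r_{ij}=r_j$. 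Thus all the $r_i$ coincide, say to a common value $r\in R$.

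Finally, for an arbitrary element $x=\sum_{i} a_i e_i \in F$ (finite sum), $R$-linearity gives
\[
s(x)=\sum_{i} a_i s(e_i) = \sum_{i} a_i r\, e_i = r x,
\]
so $s$ is the scalar $r$. There is no real obstacle here: the only subtlety is remembering that the scalar produced by the hypothesis depends \emph{a priori} on the element, which is precisely what is exploited by testing it on the sum $e_i+e_j$ and then using the freeness of $F$.
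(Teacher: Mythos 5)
Your proof is correct and follows essentially the same approach as the paper's: apply the hypothesis to each basis vector and to the sum $e_i+e_j$ of two basis vectors, then use linear independence to force all the scalars to coincide. The paper's version is just terser (it reduces immediately to the rank-$2$ case), while you spell out the trivial ranks and the infinite-basis bookkeeping explicitly.
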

\textit{Proof.}
It suffices to prove the statement if $F$ has rank $2$. Let $e_1,e_2$ be a basis of $F$. Then $s(e_1)=\lambda_1 e_1$ and $s(e_2)=\lambda_2 e_2$ and $s(e_1 + e_2) = \mu(e_1 + e_2)$ for some $\lambda_1$, $\lambda_2$, $\mu$ in $R$. We deduce that $\lambda_1 = \lambda_2 = \mu$ therefore $s$ is the multiplication by $\mu$ on $F$.
\hfill $\square$
\newline

The following lemma in the case of abelian varieties is proven in \cite[Step 2 of the proof of Theorem 1.1]{Banaszaklindep}.

\begin{lem}\label{scalarontorsion}
Let $G$ be the product of an abelian variety and a torus defined over a number field $K$. Let $\alpha$ be a $\bar{K}$-endomorphism of $G$.
Suppose that there exists a prime number $\ell$ such that the following holds: for every $n>0$ and for every torsion point $T$ of $G$ of order $\ell^n$ the point $\alpha(T)$ is a multiple of $T$.
Then $\alpha$ is a scalar.
\end{lem}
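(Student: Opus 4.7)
The plan is to transfer the hypothesis to the $\ell$-adic Tate module of $G$ and invoke the preceding lemma. Writing $G=A\times T$ with $A$ an abelian variety and $T$ a torus, the Tate module $T_\ell(G)\cong T_\ell(A)\oplus T_\ell(T)$ is a free $\mathbb{Z}_\ell$-module of rank $2\dim A+\dim T$ on which $\alpha$ acts $\mathbb{Z}_\ell$-linearly. The goal is to show that this action is multiplication by some scalar $c\in\mathbb{Z}_\ell$, and then to force $c\in\mathbb{Z}$.

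I first verify that $\alpha$ sends each element of $T_\ell(G)$ to a $\mathbb{Z}_\ell$-multiple of itself. Let $e\in T_\ell(G)$ be primitive (i.e., $e\notin\ell T_\ell(G)$). Under the identification $T_\ell(G)/\ell^n T_\ell(G)=G[\ell^n]$, the reduction $e_n$ has exact order $\ell^n$, so the hypothesis supplies an integer $c_n$, unique modulo $\ell^n$, with $\alpha(e_n)=c_n e_n$. Applying $\alpha$ to the relation $\ell e_{n+1}=e_n$ gives $c_{n+1}\equiv c_n\pmod{\ell^n}$, so the $c_n$ lift to a single $c(e)\in\mathbb{Z}_\ell$ satisfying $\alpha(e)=c(e)\cdot e$ in $T_\ell(G)$. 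An arbitrary $e=\ell^k e'$ is handled by $\mathbb{Z}_\ell$-linearity, yielding the same constant $c(e')$. The preceding lemma, applied with $R=\mathbb{Z}_\ell$ and $F=T_\ell(G)$, then produces a single $c\in\mathbb{Z}_\ell$ with $\alpha=c\cdot\id$ on $T_\ell(G)$.

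To conclude, Lemma~\ref{zerohom} gives $\End_{\bar K}(G)=\End_{\bar K}(A)\times\End_{\bar K}(T)$, and the classical faithful representations of the endomorphism rings of an abelian variety and of a torus on their Tate modules combine into an injection $\End_{\bar K}(G)\otimes\mathbb{Z}_\ell\hookrightarrow\End_{\mathbb{Z}_\ell}(T_\ell(G))$. Under this injection $\alpha\otimes 1$ and $\id\otimes c$ have the same image, hence coincide in $\End_{\bar K}(G)\otimes\mathbb{Z}_\ell$. Since $\End_{\bar K}(G)$ is a $\mathbb{Z}$-order in a finite-dimensional semisimple $\mathbb{Q}$-algebra in which $\mathbb{Z}\cdot\id$ is saturated (any endomorphism of $G$ acting as a rational scalar must be multiplication by an integer), this forces $c\in\mathbb{Z}$ and $\alpha=c\cdot\id_G$. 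The main obstacle is the level-by-level compatibility argument producing $c(e)$, which relies essentially on the hypothesis covering every torsion order $\ell^n$; the passage from a $\mathbb{Z}_\ell$-scalar on $T_\ell(G)$ to a $\mathbb{Z}$-scalar on $G$ is then formal given the Tate-module embedding.
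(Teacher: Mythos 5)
Your proof is correct and follows essentially the same route as the paper: use the free-module lemma to show that $\alpha$ acts on the $\ell$-power torsion as a single scalar $c\in\mathbb{Z}_\ell$, then descend from a $\mathbb{Z}_\ell$-scalar on $T_\ell(G)$ to an integer scalar on $G$. The only (minor) difference is the last step, where the paper splits $G$ into its abelian and toric factors and invokes Mumford's theorem and an integer-matrix computation separately, whereas you argue uniformly via the faithfulness of $\End_{\bar K}(G)\otimes\mathbb{Z}_\ell$ on $T_\ell(G)$ and the saturation of $\mathbb{Z}\cdot\id$ in the endomorphism order --- the same underlying classical facts.
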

\begin{proof}

Apply the previous lemma to $R=\mathbb Z/\ell^n \mathbb Z$, $F=
G[\ell^n]$ and taking for $s$ the image of $\alpha$ in $\End_{\mathbb Z} G[\ell^n]$. We deduce that $\alpha$ acts as a scalar on $G[\ell^n]$. 
So for every $n>0$ there exists an integer $c_n$ such that $\alpha$ acts as the multiplication by $c_n(\bmod \ell^n)$ on $G[\ell^n]$.
Since $\alpha$ commutes with the multiplication by $\ell$ we deduce that $c_{n+1}(\bmod \ell^n)\equiv c_{n}(\bmod \ell^n)$ for every $n$.
This means that there exists $c$ in $\mathbb Z_\ell$ such that $c(\bmod \ell^n)\equiv c_{n}(\bmod \ell^n)$ for every $n$ and that $\alpha$ acts on $T_\ell G$ as the multiplication by $c$.

Write $G=A\times T$ where $A$ is an abelian variety and $T$ is a torus. By Lemma~\ref{zerohom}, $\alpha$ is the product $\alpha_A\times \alpha_T$ of an endomorphism of $A$ and an endomorphism of $T$. It suffices to prove the following: if $A$ (respectively $T$) is non-zero then $c$ is an integer and $\alpha_A$ (respectively $\alpha_T$) is the multiplication by $c$.

Suppose that $A$ is non-zero. We know that $\alpha_A$ acts on $T_\ell A$ as the multiplication by $c$. By \cite[Theorem 3 p.176]{Mumfordbook}, $\alpha_A$ is the multiplication by an integer. Consequently, $c$ is an integer and $\alpha_A$ is the multiplication by $c$.

Suppose that $T$ is non-zero. We reduce at once to the case where $T=\mathbb G_m^h$ for some $h\geq 1$. 
The endomorphism ring of $\mathbb G_m$ is $\mathbb Z$ hence we can identify the endomorphism ring of $T$ with the ring of $h\times h$-matrices with integer coefficients. Since $\alpha_{T}$ acts on $T_\ell T$ as the multiplication by $c$, we deduce that $\alpha_{T}$ is a scalar matrix. Hence $c$ is an integer and $\alpha_{T}$ is the multiplication by $c$.
\end{proof}

\section{An application of the results on the support problem} \label{sectionlindep}

\begin{thm}\label{lindepthm}
Let $G$ be the product of an abelian variety and a torus defined over a number field $K$.
Let $R$ be a $K$-rational point on $G$ and let $\Lambda$ be a finitely generated subgroup of $G(K)$. Suppose that for all but finitely many primes $\mathfrak p$ of $K$ the point $(R \bmod\mathfrak{p})$ belongs to $(\Lambda \bmod\mathfrak{p})$. Then there exists a non-zero integer $m$ (depending only on $G$, $K$ and the rank of $\Lambda$) such that $mR$ belongs to the left $\End_K G$-submodule of $G(K)$ generated by $\Lambda$.
Furthermore, if $\Lambda$ is a free left $\End_K G$-submodule of $G(K)$ then $R$ belongs to $\Lambda$.
\end{thm}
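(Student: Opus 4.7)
The plan is to attack the problem one prime $\ell$ at a time via Kummer theory and an $\ell$-adic support-problem argument, and then to patch the resulting local inclusions together to produce the uniform denominator $m$. The furthermore statement will drop out as a by-product: once $\Lambda$ carries the structure of a free $\End_K G$-module, the relation produced by the main argument can be divided by $m$ using Lemma~\ref{scalarontorsion}.

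Let $P_1,\dots,P_s$ generate $\Lambda$ as a group. Since the torsion subgroup of $G(K)$ is finite, absorbing any torsion of $\Lambda$ into the final denominator reduces us to the case where $\Lambda$ is torsion-free of rank $r$. Fix a prime $\ell$ and, for every $n\ge 1$, choose $\ell^n$-division points $\widetilde R,\widetilde P_i\in G(\bar K)$ of $R$ and of the $P_i$. The Kummer cocycles
\[
\phi_R(\sigma)=\sigma\widetilde R-\widetilde R,\qquad \phi_i(\sigma)=\sigma\widetilde P_i-\widetilde P_i
\]
take values in $G[\ell^n]$ and factor through the finite Galois extension $L_n/K$ generated by $G[\ell^n]$ together with the chosen division points. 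The hypothesis on reductions, combined with the Chebotarev density theorem applied to $L_n/K$, translates to the statement that for every $\sigma\in\Gal(L_n/K)$ the value $\phi_R(\sigma)$ lies in the $\mathbb Z$-submodule of $G[\ell^n]$ generated by $\phi_1(\sigma),\dots,\phi_s(\sigma)$.

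I would then invoke the $\ell$-adic support-problem machinery alluded to in the introduction (Lemma~\ref{banladic} and the techniques behind \cite[Theorem A]{GajdaGornisiewicz}): for each $\ell$ at which $T_\ell G$ is integrally semisimple, the pointwise cocycle inclusion above upgrades to a genuine module inclusion $R\otimes 1\in(\End_K G\otimes\mathbb Z_\ell)\cdot(\Lambda\otimes 1)$ inside $G(K)\otimes\mathbb Z_\ell$. At the finitely many exceptional primes $\ell$ where integral semisimplicity fails---a set depending only on $G$ and $K$---one picks up a bounded local factor whose size depends on $G$, $K$ and $r$ but not on $\ell$. Patching these local inclusions over all $\ell$ yields the non-zero integer $m$ with $mR\in\End_K G\cdot\Lambda$ and the claimed dependencies.

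For the furthermore part, suppose $\Lambda=\bigoplus_{i=1}^{r}\End_K G\cdot P_i$ is free as a left $\End_K G$-module. The relation $mR=\sum_{i=1}^{r}\alpha_i P_i$ obtained above has unique coefficients $\alpha_i\in\End_K G$, so it suffices to show that $m$ divides each $\alpha_i$ in $\End_K G$. I would prove this by applying Lemma~\ref{scalarontorsion} to the $\bar K$-endomorphism represented by $\alpha_i/m$: the hypothesis together with Chebotarev forces this endomorphism to act on every $\ell^n$-torsion subgroup of $G$ as multiplication by an integer, whence $\alpha_i/m\in\End_K G$ and therefore $R\in\Lambda$. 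The main obstacle is the $\ell$-adic step, namely upgrading the pointwise cocycle inclusion to a module-theoretic one in the presence of a possibly non-commutative $\End_K G$ and of a few primes where $T_\ell G$ is not integrally semisimple; tracking these exceptional contributions is exactly what forces $m$ to depend on the rank $r$.
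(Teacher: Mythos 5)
Your outer strategy---work one prime $\ell$ at a time, obtain a multiplier $c_\ell$ with controlled $\ell$-adic valuation, and recover $m$ from the ideal generated by the $c_\ell$'s---matches the shape of the paper's argument. But the engine you put inside it is not the paper's, and as sketched it has real gaps. The paper does not run a Kummer/Chebotarev/integral-semisimplicity argument here at all: it forms the points $P=(P_1,\dotsc,P_s)$ and $Q=(R,0,\dotsc,0)$ in $G^s$ and invokes the support-problem results of \cite{Peruccatwo} (the Main Theorem together with Propositions 8 and 9) to produce $\phi\in\End_K(G^s)$ and $c$ with $\phi(P)=cQ$ and $v_\ell(c)\le v_\ell(m)$. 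Your replacement step---``the pointwise cocycle inclusion upgrades to a module inclusion in $G(K)\otimes\mathbb Z_\ell$''---is precisely the hard content of \cite[Theorem A]{GajdaGornisiewicz}, and that theorem carries hypotheses (that $\hat\Lambda$ is a free $\End_KG\otimes\mathbb Z_\ell$-module and that $\hat R$ generates a free module) which fail for a general finitely generated $\Lambda$ and a general $R$; removing exactly those hypotheses is the point of this paper. You cannot cite that machinery for the general statement, and you give no substitute argument. The Chebotarev translation of the reduction hypothesis into ``$\phi_R(\sigma)\in\sum_i\mathbb Z\,\phi_i(\sigma)$ for every $\sigma$'' is also asserted rather than proved, and it is not a formality.

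The ``furthermore'' part is where the proposal actually breaks. You propose to show that $m$ divides each $\alpha_i$ by applying Lemma~\ref{scalarontorsion} to ``the $\bar K$-endomorphism represented by $\alpha_i/m$''. That object is not defined unless you have already proved what you want; Lemma~\ref{scalarontorsion} takes an honest endomorphism as input and concludes it is an \emph{integer scalar}, not that a formal quotient lies in $\End_KG$. Worse, even if you had $\alpha_i=m\beta_i$ with $\beta_i\in\End_KG$, the relation $mR=\sum m\beta_iP_i$ only gives $m\bigl(R-\sum\beta_iP_i\bigr)=0$, i.e.\ $R\in\Lambda+G(K)[m]$, not $R\in\Lambda$; dividing a relation by $m$ in a group with torsion loses exactly the information you need. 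The paper avoids this entirely: when $\Lambda$ is a free left $\End_KG$-submodule, the point $P$ is independent in $G^s$, so \cite[Proposition 8]{Peruccatwo} already yields, for each $\ell$, a multiplier $c_\ell$ \emph{coprime to} $\ell$ with $c_\ell R\in\End_KG\cdot\Lambda=\Lambda$; since the $c_\ell$'s generate the unit ideal, $R\in\Lambda$ with no division step. (Your instinct to use Lemma~\ref{scalarontorsion} belongs to Theorem~\ref{banstronger}, where it is used contrapositively to produce a torsion point $T$ with $\phi_1(T)$ not a multiple of $T$ and derive a contradiction at well-chosen primes---a different theorem with a different hypothesis on $\Lambda$.)
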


Remark that if $G$ is an abelian variety, the integer $m$ in Theorem~\ref{lindepthm} depends only on $G$ and $K$ since the rank of $\Lambda$ is bounded by the rank of the Mordell-Weil group.

\begin{lem}\label{lindep}
Let $G$ be the product of an abelian variety and a torus defined over a number field $K$.
Let $R$ be a $K$-rational point on $G$ and let $\Lambda$ be a finitely generated subgroup of $G(K)$. Fix a rational prime $\ell$.
Suppose that for all but finitely many primes $\mathfrak p$ of $K$ there exists an integer $c_\mathfrak p$ coprime to $\ell$ such that $(c_\mathfrak pR \bmod \mathfrak p)$ belongs to $(\Lambda\bmod \mathfrak p)$.
Then there exists a non-zero integer $c$ such that $cR$ belongs to the left $\End_K G$-submodule of $G(K)$ generated by $\Lambda$.
One can take $c$ such that $v_\ell(c)\leq v_\ell(m)$ where $m$ is a non-zero integer depending only on $G$, $K$ and the rank of $\Lambda$ (hence not depending on $\ell$).
If $\Lambda$ is a free left $\End_K G$-submodule of $G(K)$, one can take $m=1$.
\end{lem}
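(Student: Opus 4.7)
The plan is to proceed by induction on the rank $r$ of $\Lambda$, combining a Chebotarev density argument on $\ell$-adic Kummer extensions with the analysis provided by Lemma~\ref{scalarontorsion}.

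\textbf{Base case ($r=0$).} Here $\Lambda$ is contained in the finite group $G(K)_{\mathrm{tors}}$, whose exponent $e$ depends only on $G$ and $K$. Replacing $c_{\mathfrak p}$ by $e c_{\mathfrak p}$, which is still coprime to $\ell$, the hypothesis becomes: for almost all $\mathfrak p$, the order of $(R\bmod \mathfrak p)$ has trivial $\ell$-part. A standard Chebotarev argument on the tower of Kummer extensions $K(\ell^{-n}R)/K$ (comparing the image of Frobenius with the assumption on the reduction) forces $R$ to lie in $\ell^n G(K) + G(K)_{\mathrm{tors}}$ for every $n$. Since $G(K)$ is finitely generated (Mordell--Weil), this forces $R\in G(K)_{\mathrm{tors}}$, so $e R = 0 \in \Lambda$. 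This gives the result with $c = e$ and $m = |G(K)_{\mathrm{tors}}|$.

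\textbf{Inductive step ($r\geq 1$).} Choose generators $P_1,\ldots,P_s$ of $\Lambda$ so that $P_1,\ldots,P_r$ form a $\mathbb{Z}$-basis of $\Lambda/\Lambda_{\mathrm{tors}}$ (at the cost of a factor bounded by $|G(K)_{\mathrm{tors}}|$). The goal is to eliminate $P_1$: produce $\alpha\in\End_K G$ and an integer $b$ with $v_\ell(b)$ bounded solely in terms of $G$, $K$, $r$, such that $R':=bR-\alpha P_1$ satisfies the same hypothesis with $\Lambda':=\langle P_2,\ldots,P_r\rangle$ of rank $r-1$ in place of $\Lambda$; the inductive hypothesis applied to $R'$ then yields the result for $R$. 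To construct $\alpha$ and $b$, one works on the Kummer side: for each $n$, the cocycle classes of $R,P_1,\ldots,P_r$ live in $H^1(K,G[\ell^n])$, and Chebotarev density applied to the Galois group of $K(G[\ell^n],\ell^{-n}\langle R,\Lambda\rangle)/K(G[\ell^n])$ translates the reduction hypothesis into a linear relation modulo $\ell^n$. Lemma~\ref{scalarontorsion} is then invoked to identify the Galois-equivariant ``coefficients'' acting on $G[\ell^n]$ with elements of $\End_K G$ acting as $\mathbb{Z}_\ell$-scalars on the $\ell$-power torsion. Passing to the inverse limit in $n$, $\ell$-adic compactness produces an element $\alpha_\infty\in\End_K G\otimes\mathbb{Z}_\ell$ witnessing the relation; approximation of $\alpha_\infty$ by an actual $\alpha\in\End_K G$ modulo a suitable $\ell^k$ produces the required integer $b$ with controlled $v_\ell(b)$.

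\textbf{Free case.} When $\Lambda$ is a free left $\End_K G$-submodule of $G(K)$, the Kummer classes of $P_1,\ldots,P_r$ are $\End_K G\otimes\mathbb{Z}_\ell$-linearly independent in $H^1(K,T_\ell G)$; the $\ell$-adic relation produced above is then exact, so no prime-to-$\ell$ multiplier is introduced at any inductive stage and one obtains $m=1$.

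\textbf{Main obstacle.} The heart of the technical work is the inductive step, specifically extracting an actual $\End_K G$-endomorphism $\alpha$ (as opposed to merely a $\Gal$-equivariant coefficient on $\ell$-power torsion) from the Chebotarev data, with $v_\ell(b)$ controlled uniformly. This is precisely where the non-commutativity of $\End_K G$ makes the argument delicate, and it is the step that requires Lemma~\ref{scalarontorsion} to guarantee that the relevant operators reduce to genuine scalars on torsion of all $\ell$-power levels.
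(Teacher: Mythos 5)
There is a genuine gap, and it sits exactly at the heart of your inductive step. The Chebotarev/Kummer translation of the hypothesis gives you, for each prime $\mathfrak p$ (equivalently, for each Frobenius element $\sigma$ in the relevant Kummer extension), a relation $c_{\mathfrak p}\,\kappa_n(R)(\sigma)=\sum_i a_{i,\sigma}\,\kappa_n(P_i)(\sigma)$ in $G[\ell^n]$ in which the integer coefficients $a_{i,\sigma}$ \emph{vary with} $\sigma$. The entire difficulty of the problem of detecting linear dependence is to convert this family of pointwise relations with varying coefficients into a single global relation $b\,\kappa_n(R)=\sum_i\alpha_i\,\kappa_n(P_i)$ with fixed coefficients, with the $\alpha_i$ realized by actual elements of $\End_K G$, and with $v_\ell(b)$ bounded uniformly in $n$. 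Your proposal offers no mechanism for this step: Lemma~\ref{scalarontorsion} cannot do it, since that lemma only says that an endomorphism which multiplies every $\ell$-power torsion point by some integer is a genuine scalar; it does not manufacture endomorphisms out of Galois-cohomological data. Moreover, the elimination $R':=bR-\alpha P_1$ would not inherit the reduction hypothesis relative to $\Lambda'=\langle P_2,\dots,P_r\rangle$: at each prime the coefficient of $(P_1\bmod\mathfrak p)$ in the local relation is $a_{1,\mathfrak p}$, which varies with $\mathfrak p$, so subtracting a single fixed $\alpha P_1$ does not remove the $P_1$-component prime by prime. The ``free case'' paragraph likewise assumes that the Kummer classes of a basis of a free $\End_KG$-module are $\End_KG\otimes\mathbb Z_\ell$-linearly independent in $H^1(K,T_\ell G)$, which is itself a nontrivial theorem (and note that what must be avoided there is an $\ell$-power multiplier, not a prime-to-$\ell$ one). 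There are also smaller slips, e.g.\ in the base case $ec_{\mathfrak p}R\equiv 0$ only bounds the $\ell$-part of the order of $(R\bmod\mathfrak p)$ by $\ell^{v_\ell(e)}$ rather than making it trivial.

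For contrast, the paper's proof is a short reduction that outsources precisely this hard step to an already-proved support-problem theorem: taking generators $P_1,\dots,P_s$ of $\Lambda$, one forms the points $P=(P_1,\dots,P_s)$ and $Q=(R,0,\dots,0)$ in $G^s$ and observes that the hypothesis of the lemma is exactly the hypothesis of the Main Theorem of \cite{Peruccatwo} for this pair; that theorem produces a $K$-endomorphism $\phi$ of $G^s$ and a non-zero integer $c$ with $\phi(P)=cQ$, hence $cR\in\End_KG\cdot\Lambda$, with $v_\ell(c)$ controlled by \cite[Proposition 9]{Peruccatwo} (depending only on $G^s$ and $K$) and with $c$ coprime to $\ell$ in the free case by \cite[Proposition 8]{Peruccatwo} because $P$ is then independent. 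If you want a self-contained argument along your lines, you would essentially have to reprove those results, i.e.\ supply the uniformization of the coefficients across Frobenius classes; as written, the inductive step asserts the conclusion of that analysis rather than proving it.
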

\textit{Proof.} 
Let $P_1,\ldots,P_s$ generate $\Lambda$ as a $\mathbb Z$-module. Consider $G^s$ and its $K$-rational points $P=(P_1,\dotsc,P_s)$ and $Q=(R,0,\dotsc,0)$. We can apply \cite[Main Theorem]{Peruccatwo} to the points $P$ and $Q$. Then there exist a $K$-endomorphism $\phi$ of $G^s$ and a non-zero integer $c$ such that $\phi(P)=cQ$.
By \cite[Proposition 9]{Peruccatwo} one can take $c$ such that $v_\ell(c)\leq v_\ell(m)$ where $m$ depends only on $G^s$ and $K$.
In particular $cR$ belongs to $\End_K G\cdot \Lambda$.
Since $s$ depends only on $G$, $K$ and the rank of $\Lambda$, the first assertion is proven.
For the second assertion, let $P_1,\ldots,P_s$ be a basis of $\Lambda$ as a left $\End_K G$-module. Since $P$ is independent, by \cite[Proposition 8]{Peruccatwo} one can take $c$ coprime to $\ell$. Consequently, one can take $m=1$.
\hfill $\square$
\newline

\textit{Proof of Theorem~\ref{lindepthm}.}
We apply Lemma~\ref{lindep} to every rational prime $\ell$. 
Then for every $\ell$ there exists an integer $c_\ell$ such that $c_\ell R$ belongs to $\End_K G\cdot \Lambda$ and $v_\ell(c_\ell)\leq v_\ell(m)$, where
$m$ is a non-zero integer depending only on $G$, $K$ and the rank of $\Lambda$.
Since $m$ is in the ideal of $\mathbb Z$ generated by the $c_\ell$'s, we deduce that $mR$ belongs to $\End_K G\cdot \Lambda$.
If $\Lambda$ is a free left $\End_K G$-submodule of $G(K)$, one can take $m=1$ in Lemma~\ref{lindep} hence $R$ belongs to $\Lambda$.
\hfill $\square$

\section{A refinement of a result by Banaszak}\label{sectionbanaszak}

In this section we extend the result by Banaszak on the problem of detecting linear dependence (\cite[Theorem 1.1]{Banaszaklindep}) from abelian varieties to products of abelian varieties and tori. Furthermore, by adapting Banaszak's proof we are able to remove his assumption on the point $R$ (that $R$ generates a free left $\End_K G$-submodule of $G(K)$).

\begin{thm}\label{banstronger}
Let $G$ be the product of an abelian variety and a torus defined over a number field $K$. 
Let $\Lambda$ be a finitely generated subgroup of $G(K)$ such that it has a set of generators (as a group ) which is a basis of a free left $\End_K G$-submodule of $G(K)$. Let $R$ be a point of $G(K)$. Suppose that for all but finitely many primes $\mathfrak p$ of $K$ the point $(R \bmod \mathfrak p)$ belongs to $(\Lambda \bmod \mathfrak p)$. Then $R$ belongs to $\Lambda$.
\end{thm}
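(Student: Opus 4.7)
The plan is to reduce the statement to showing that certain $\End_K G$-coefficients of $R$ act as integer scalars, and then extract this scalarness via Chebotarev and Kummer theory, following the method of Banaszak.

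Let $P_1,\dotsc,P_s$ denote the given basis of the free $\End_K G$-submodule $\Lambda':=\End_K G\cdot\Lambda$ of $G(K)$. Since $\Lambda\subseteq\Lambda'$, the hypothesis on $R$ transfers to $\Lambda'$, so Theorem~\ref{lindepthm} yields $R\in\Lambda'$. Thus there is a unique decomposition $R=\sum_{i=1}^s\phi_i(P_i)$ with $\phi_i\in\End_K G$, and since $\Lambda=\mathbb Z P_1\oplus\dotsb\oplus\mathbb Z P_s$, the conclusion $R\in\Lambda$ amounts to showing that every $\phi_i$ is multiplication by an integer. Fix a rational prime $\ell$. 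By Lemma~\ref{scalarontorsion} it suffices to prove that, for every $n\geq 1$ and every $T\in G[\ell^n]$, the point $\phi_i(T)$ is a multiple of $T$.

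The Kummer-theoretic setup is as follows. For each $n$, let $K_n$ be the compositum of $K(G[\ell^n])$ and the fields of definition of all $\ell^n$-division points of $P_1,\dotsc,P_s$; fix division points $\tilde P_j\in K_n$ with $\ell^n\tilde P_j=P_j$. The point $\tilde R:=\sum_j\phi_j(\tilde P_j)$ satisfies $\ell^n\tilde R=R$ and lies in $K_n$. The Kummer cocycle $\sigma\mapsto\bigl(\sigma(\tilde P_j)-\tilde P_j\bigr)_j$ embeds $\Gal(K_n/K(G[\ell^n]))$ into $\prod_{j=1}^s G[\ell^n]$. The key technical input, which I would invoke from~\cite{Banaszaklindep}, is that because $P_1,\dotsc,P_s$ is a basis of a free $\End_K G$-submodule, this embedding has $\ell$-adically bounded cokernel: there exists $N$ independent of $n$ such that the image contains $\prod_j\ell^N G[\ell^n]$. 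This uniform Kummer surjectivity for $\End_K G$-independent points, proved via the Bashmakov--Ribet method (see also Larsen and Gajda--G\'ornisiewicz), is the technical heart of Banaszak's Theorem~1.1. Granting it, for any $T\in G[\ell^n]$ I can choose $\sigma\in\Gal(K_n/K(G[\ell^n]))$ with $\sigma(\tilde P_j)=\tilde P_j$ for $j\ne i$ and $\sigma(\tilde P_i)-\tilde P_i=\ell^N T$.

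By the Chebotarev density theorem this $\sigma$ occurs as $\Frob_\mathfrak p$ for infinitely many primes $\mathfrak p$ of $K$ that split completely in $K(G[\ell^n])$. For each such $\mathfrak p$ the hypothesis supplies integers $n_{j,\mathfrak p}$ with $R\equiv\sum_j n_{j,\mathfrak p}P_j\pmod{\mathfrak p}$, and because $G[\ell^n]$ injects into the reduction at a prime of good reduction not dividing $\ell$, the congruence lifts to the Kummer identity
\[
\sigma(\tilde R)-\tilde R=\sum_j n_{j,\mathfrak p}\bigl(\sigma(\tilde P_j)-\tilde P_j\bigr)=n_{i,\mathfrak p}\,\ell^N T
\]
in $G[\ell^n]$. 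On the other hand, using $\sigma|_{G[\ell^n]}=\id$ together with $\tilde R=\sum_j\phi_j(\tilde P_j)$, direct computation gives $\sigma(\tilde R)-\tilde R=\phi_i(\sigma(\tilde P_i)-\tilde P_i)=\ell^N\phi_i(T)$. Comparing yields $\phi_i(T)-n_{i,\mathfrak p}T\in G[\ell^N]$. Writing an arbitrary $T^\ast\in G[\ell^{n-N}]$ as $\ell^N T$ for some $T\in G[\ell^n]$ and multiplying the previous relation by $\ell^N$ produces $\phi_i(T^\ast)=n_{i,\mathfrak p}T^\ast\in\mathbb Z T^\ast$. Since $n-N$ can be made arbitrarily large, $\phi_i$ sends every $\ell$-power torsion point to an integer multiple of itself, and Lemma~\ref{scalarontorsion} concludes that $\phi_i$ is multiplication by an integer. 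I expect the main obstacle to be the uniform Kummer surjectivity—the existence of $N$ independent of $n$—which is precisely the content of Banaszak's machinery for products of abelian varieties and tori and which I would import rather than reprove.
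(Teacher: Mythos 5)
Your overall strategy---reduce to $R\in\End_K G\cdot\Lambda$, write $R=\sum_i\phi_i(P_i)$, and show each $\phi_i$ is multiplication by an integer by proving it sends every $\ell$-power torsion point to a multiple of itself and invoking Lemma~\ref{scalarontorsion}---coincides with the paper's up to that point (the paper works $\ell$-adically through Lemma~\ref{banladic}, producing $cR=\sum\phi_iP_i$ with $c$ coprime to $\ell$ and recombining over all $\ell$ at the end; your use of the ``furthermore'' clause of Theorem~\ref{lindepthm} to get $R\in\End_KG\cdot\Lambda$ outright is a legitimate shortcut). Where you genuinely diverge is in how scalarness on torsion is extracted. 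You run the Bashmakov--Ribet/Chebotarev argument directly: uniform Kummer surjectivity, a Frobenius realizing the cocycle $(0,\dots,\ell^NT,\dots,0)$, and comparison of the two expressions for $\sigma(\tilde R)-\tilde R$. The paper instead argues by contradiction: if $\phi_1(T)$ is not a multiple of $T$ for some $T\in G[\ell^\infty]$, then $(P_1-T,P_2,\dots,P_n)$ is independent in $G^n$, and \cite[Proposition 12]{Peruccaorder} supplies infinitely many primes $\mathfrak q$ at which $P_1-T$ and the other $P_i$ reduce to points of order coprime to $\ell$; at such a prime the reduction of $cR$ is forced to be an $\ell$-power-order point not lying in $(\Lambda\bmod\mathfrak q)$, contradicting the hypothesis. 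The paper's packaging buys the case $G=A\times T$ for free, since \cite{Peruccaorder} and \cite{Peruccatwo} are stated for products of abelian varieties and tori.

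That points to the one real gap in your write-up: the uniform Kummer surjectivity (an $N$ independent of $n$ with $\prod_j\ell^NG[\ell^n]$ contained in the image) is imported from \cite{Banaszaklindep}, but Banaszak's result concerns abelian varieties only, and extending his machinery to products with tori is precisely part of what this section of the paper contributes. The statement you need is true (it goes back to Ribet's Kummer theory on extensions of abelian varieties by tori), but as written your proof rests on a black box not established in the cited source at the required generality; you would either have to prove the uniform boundedness of the Kummer cokernel for $A\times T$, or reroute through the reduction-order statements of \cite[Propositions 11 and 12]{Peruccaorder} as the paper does. The remaining steps---the cocycle computation, lifting the congruence $R\equiv\sum_jn_{j,\mathfrak p}P_j$ at a completely split prime of good reduction not above $\ell$, and passing from $\phi_i(T)-n_{i,\mathfrak p}T\in G[\ell^N]$ to scalarness on all of $G[\ell^\infty]$---are correct.
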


\begin{lem}\label{banladic}
Let $G$ be the product of an abelian variety and a torus defined over a number field $K$.
Let $\Lambda$ be a finitely generated subgroup of $G(K)$ such that it has a set of generators (as a group) which is a basis of a free left $\End_K G$-submodule of $G(K)$. Let $R$ be a point in $G(K)$.
 Fix a prime number $\ell$.
Suppose that for all but finitely many primes $\mathfrak p$ of $K$ there exists an integer $c_{\mathfrak p}$ coprime to $\ell$ such that 
the point $(c_{\mathfrak p}R \bmod \mathfrak p)$ belongs to $(\Lambda \bmod \mathfrak p)$. Then there exists an integer $c$ coprime to $\ell$ such that $cR$ belongs to $\Lambda$.
\end{lem}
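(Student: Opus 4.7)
The plan is to bootstrap from Lemma~\ref{lindep} by showing that the $\End_K G$-coefficients it produces are forced to be integer multiplications. Let $P_1,\ldots,P_s$ denote the basis of the distinguished free left $\End_K G$-submodule which is simultaneously a group-theoretic generating set of $\Lambda$. Applying Lemma~\ref{lindep} to the prime $\ell$ furnishes an integer $c$ coprime to $\ell$ together with endomorphisms $\phi_1,\ldots,\phi_s\in\End_K G$ satisfying
$$c\,R \;=\; \sum_{i=1}^{s}\phi_i(P_i)\qquad\text{in }G(K).$$
If each $\phi_i$ turns out to be multiplication by some integer $n_i$, then $cR=\sum_i n_iP_i\in\Lambda$ and we are done; the entire task is therefore to prove that each $\phi_i$ is a scalar.

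To show that a given $\phi_i$ is a scalar I invoke Lemma~\ref{scalarontorsion}: it suffices to check that, for every $n\ge 1$ and every torsion point $T\in G[\ell^n]$ of exact order $\ell^n$, the image $\phi_i(T)$ is an integer multiple of $T$. Fix such a $T$ and pass to a finite extension $L/K$ over which $T$, the full group $G[\ell^n]$, and $\ell^n$-division points of $P_1,\ldots,P_s$ are all defined. The reduction hypothesis descends to $L$: for all but finitely many primes $\mathfrak{q}$ of $L$ lying above a prime $\mathfrak{p}$ of $K$, there exist integers $a_{i,\mathfrak{p}}$ and $c_\mathfrak{p}$ with $\gcd(c_\mathfrak{p},\ell)=1$ satisfying $c_\mathfrak{p}R\equiv\sum_i a_{i,\mathfrak{p}}P_i\pmod{\mathfrak{q}}$. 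Multiplying the global identity by $c_\mathfrak{p}$ and the local congruence by $c$ and subtracting yields
$$\sum_{i=1}^{s}\bigl(c_\mathfrak{p}\phi_i - c\,a_{i,\mathfrak{p}}\bigr)(P_i)\equiv 0\pmod{\mathfrak{q}}.$$

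The crux is then to decouple the summands, namely to extract from this family of congruences an individual congruence $(c_\mathfrak{p}\phi_i - c\,a_{i,\mathfrak{p}})(P_i)\equiv 0\pmod{\mathfrak{q}}$ for each $i$ separately, and to translate it into a statement about the action of $\phi_i$ on $T$. I expect to achieve this via a Chebotarev argument in the Kummer tower $L\bigl(\ell^{-n}P_1,\ldots,\ell^{-n}P_s\bigr)/L$: the $\End_K G$-freeness of the $P_j$ guarantees that their Kummer classes in $H^1(G_L,G[\ell^n])$ are independent, so Frobenius elements can be chosen that single out the index $i$ and evaluate on $T$. Once the decoupling is obtained, invertibility of $c_\mathfrak{p}$ on $G[\ell^n]$ forces $\phi_i(T)\in\mathbb{Z}\cdot T$. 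This decoupling step, where the freeness of $\Lambda$ as a left $\End_K G$-module enters essentially, is the main technical obstacle; everything else is formal. Once it is carried out, Lemma~\ref{scalarontorsion} promotes each $\phi_i$ to an integer and the conclusion $cR\in\Lambda$ follows with the same $c$ supplied by Lemma~\ref{lindep}.
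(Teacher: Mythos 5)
Your overall architecture matches the paper's: apply Lemma~\ref{lindep} to get $cR=\sum_i\phi_i(P_i)$ with $c$ coprime to $\ell$, then use Lemma~\ref{scalarontorsion} to reduce everything to showing that each $\phi_i$ sends every $\ell$-power torsion point to a multiple of itself. But the step you yourself flag as ``the main technical obstacle'' --- decoupling the congruence $\sum_i(c_\mathfrak{p}\phi_i-c\,a_{i,\mathfrak{p}})(P_i)\equiv 0\pmod{\mathfrak q}$ and converting it into a statement about $\phi_i(T)$ --- is precisely the content of the lemma, and you have not supplied it. Gesturing at ``a Chebotarev argument in the Kummer tower'' is not enough: establishing that the Kummer classes of the $P_j$ are independent in $H^1(G_L,G[\ell^n])$ and that Frobenius elements can be chosen compatibly with a prescribed action on $G[\ell^n]$ is itself a nontrivial theorem (resting on Bogomolov's theorem on homotheties, among other things), and nothing in your sketch explains how the torsion point $T$ --- which does not appear anywhere in your congruence --- gets coupled to the behaviour of $\phi_i$ at the chosen primes.

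The paper closes this gap with a specific device you are missing. Arguing by contradiction, it picks $T\in G[\ell^\infty]$ with $\phi_1(T)$ not a multiple of $T$, and then applies \cite[Proposition 12]{Peruccaorder} to the point $(P_1-T,P_2,\ldots,P_n)$, which is independent in $G^n$ because the $P_i$ form a basis of a free $\End_K G$-module and subtracting a torsion point does not destroy independence. That proposition produces infinitely many primes $\mathfrak q$ at which every $P_i$ ($i\neq 1$) and $P_1-T$ reduce to points of order coprime to $\ell$; multiplying by a suitable $m$ coprime to $\ell$ then kills all the $P_i$ for $i\neq1$ and replaces $P_1$ by $T$, yielding $(mc_\mathfrak{q}cR\bmod\mathfrak q)=(mc_\mathfrak{q}\phi_1(T)\bmod\mathfrak q)$, a point of $\ell$-power order that is not a multiple of $(T\bmod\mathfrak q)$ and hence cannot lie in $(\Lambda\bmod\mathfrak q)$ --- the desired contradiction. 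Without an input of this strength (a result letting you simultaneously prescribe the $\ell$-parts of the orders of the reductions of independent points, shifted by torsion), your plan does not go through; with it, your direct formulation and the paper's contrapositive one are essentially the same argument. As it stands, the proposal is an accurate outline with its core left unproved.
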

\begin{proof}
By Lemma~\ref{lindep} applied to $\End_K G \cdot\Lambda$, there exists an integer $c$ coprime to $\ell$ such that $cR$ belongs to $\End_K G\cdot \Lambda$. Let $\{P_1,..P_n\}$ be a set of generators for $\Lambda$ which is a basis for $\End_K G \cdot\Lambda$. We can write
$$cR=\sum_{i=1}^n \phi_i P_i$$
 for some $\phi_i$ in $\End_K G$. Without loss of generality it suffices to prove that $\phi_1$ is the multiplication by an integer.

Suppose that $\phi_1$ is not the multiplication by an integer and apply Lemma~\ref{scalarontorsion} to $\phi_1$. Then there exists a point $T$ in $G[\ell^\infty]$ such that $\phi_1(T)$ is not a multiple of $T$.
Let $L$ be a finite extension of $K$ where $T$ is defined.
The point $(P_1-T,P_2,\ldots,P_n)$ is independent in $G^n$ hence by \cite[Proposition 12]{Peruccaorder} there are infinitely many primes $\mathfrak q$ of $L$ such that the following holds:
$(P_i \bmod {\mathfrak q})$ has order coprime to $\ell$ for every $i\neq 1$ and $(P_1-T \bmod {\mathfrak q})$ has order coprime to $\ell$.
By discarding finitely many primes $\mathfrak q$, we may assume the following: the order of $(T \bmod {\mathfrak q})$ equals the order of $T$; the point $(\phi_1(T) \bmod {\mathfrak q})$ is not a multiple of $(T \bmod {\mathfrak q})$ and in particular it is non-zero; $(c_{\mathfrak q}R \bmod {\mathfrak q})$ belongs to $(\Lambda \bmod {\mathfrak q})$ for some integer $c_{\mathfrak q}$ coprime to $\ell$. 

Fix $\mathfrak q$ as above. We know that there exists an integer $m$ coprime to $\ell$ such that $(mP_i \bmod {\mathfrak q})=0$ for every $i\neq 1$ and $(m(P_1-T)\bmod {\mathfrak q})=0.$
Then we have: $$(mc_{\mathfrak q}cR \bmod {\mathfrak q})=(mc_{\mathfrak q} \phi_1(P_1) \bmod {\mathfrak q})=(mc_{\mathfrak q} \phi_1(T) \bmod {\mathfrak q}).$$
Since $v_\ell(mc_{\mathfrak q})=0$, we deduce that the point $(mc_{\mathfrak q}cR \bmod {\mathfrak q})$ has order a power of $\ell$ and it is not a multiple of $(T \bmod {\mathfrak q})$.
Then $(mc_{\mathfrak q}cR \bmod {\mathfrak q})$ does not belong to $\sum_{i=1}^n \mathbb Z (P_i \bmod {\mathfrak q})$. 
Consequently, $(c_{\mathfrak q}R \bmod {\mathfrak q})$ does not belong to $(\Lambda \bmod {\mathfrak q})$ and we have a contradiction.
\end{proof}

\begin{proof}[Proof of Theorem~\ref{banstronger}]
We can apply Lemma~\ref{banladic} to every rational prime $\ell$. Then for every $\ell$ there exists an integer $c_\ell$ coprime to $\ell$ such that $c_\ell R$ belongs to $\Lambda$. Since $1$ is contained in the ideal of $\mathbb Z$ generated by the $c_\ell$'s, we deduce that $R$ belongs to  $\Lambda$.
\end{proof}

\section{On a problem by Kowalski}\label{sectionkowalski}

\begin{thm}\label{westonkowalski}
Let $G$ be the product of an abelian variety and a torus defined over a number field $K$. Let $\Lambda$ be a cyclic subgroup of $G(K)$. Let $R$ be a $K$-rational point on $G$. Suppose that for all but finitely many primes $\mathfrak p$ of $K$ the point $(R \bmod\mathfrak p)$ belongs to $(\Lambda \bmod\mathfrak p)$. Then $R$ belongs to $\Lambda$. 
\end{thm}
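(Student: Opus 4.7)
The plan is to reduce the cyclic case to Theorem~\ref{banstronger} applied inside the connected subgroup $G_P^0$. Write $\Lambda=\mathbb{Z}P$. If $P=0$, the hypothesis forces $R\bmod\mathfrak{p}=0$ for almost every $\mathfrak{p}$, so $R=0\in\Lambda$; from now on I assume $P\neq 0$ and use $X$ and $n_P$ as in the preliminaries, so that $P\in X+G_P^0$ and $G_P$ has $n_P$ connected components.

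The first step will be to prove $R\in G_P(K)$. The hypothesis gives $(R\bmod\mathfrak{p})\in(G_P\bmod\mathfrak{p})$ for almost all $\mathfrak{p}$. I would look at the image $\bar R$ of $R$ in the quotient algebraic group $G/G_P^0$, which is again a product of an abelian variety and a torus because by Lemma~\ref{zerohom} the connected subgroup $G_P^0\subset A\times T$ splits as $A'\times T'$. For almost every $\mathfrak{p}$, $\bar R\bmod\mathfrak{p}$ lies in the finite subgroup $(G_P/G_P^0)\bmod\mathfrak{p}$, which by Lemma~\ref{cosetXmultiple} has order $n_P$; hence $n_P\bar R\bmod\mathfrak{p}=0$ for almost all $\mathfrak{p}$. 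Invoking the standard fact that a non-zero $K$-rational point on a product of an abelian variety and a torus cannot reduce to the identity at cofinitely many primes, I conclude $n_P\bar R=0$. Injectivity of reduction on $(G/G_P^0)[n_P]$ at almost every prime then forces $\bar R\in(G_P/G_P^0)(K)$, i.e.\ $R\in G_P(K)$.

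With $R\in G_P(K)$ in hand, let $a\in\{0,\dotsc,n_P-1\}$ be the integer such that $R$ lies in the component $aX+G_P^0$, and put $S:=R-aP$. Since $P\in X+G_P^0$ we have $aP\in aX+G_P^0$, so $S\in G_P^0(K)$. For almost every $\mathfrak{p}$, writing $(R\bmod\mathfrak{p})=c_\mathfrak{p}(P\bmod\mathfrak{p})$ and matching components in $G_P/G_P^0\bmod\mathfrak{p}$ via Lemma~\ref{cosetXmultiple} gives $c_\mathfrak{p}\equiv a\pmod{n_P}$, so $(S\bmod\mathfrak{p})=\frac{c_\mathfrak{p}-a}{n_P}(n_PP\bmod\mathfrak{p})\in\mathbb{Z}(n_PP\bmod\mathfrak{p})$. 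By Lemma~\ref{dimGdR}, $G_{n_PP}=G_P^0$, so $n_PP$ is independent in $G_P^0$, and $\{n_PP\}$ is a basis of the free left $\End_K G_P^0$-submodule of $G_P^0(K)$ it generates. I then apply Theorem~\ref{banstronger} to $G_P^0$, the point $S$, and $\Lambda'':=\mathbb{Z}(n_PP)$, obtaining $S\in\mathbb{Z}(n_PP)$; writing $S=jn_PP$ gives $R=aP+S=(a+jn_P)P\in\mathbb{Z}P=\Lambda$.

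The main obstacle will be the first step, showing $R\in G_P(K)$: this requires combining Lemma~\ref{cosetXmultiple} with the standard principle that a non-zero rational point does not reduce to the identity at cofinitely many primes. Once $R\in G_P(K)$ is established, the remaining manipulations inside $G_P^0$ cleanly convert the cyclic problem into an instance of Theorem~\ref{banstronger} governed by the independent point $n_PP$.
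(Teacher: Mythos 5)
Your argument is correct in substance, but it takes a genuinely different route from the paper's. The paper first treats the connected case (reducing to $G=G_P$ via Kowalski's Lemma 4.2 and invoking Theorem~\ref{banstronger}), then applies that case to $n_PR$ and $n_PP$ to get $R=gP+T$ with $T$ a $K$-rational torsion point, and finally spends the dedicated Lemma~\ref{westonkowalskilem} --- the technical heart of the section, using auxiliary torsion points and Propositions 11 and 12 of the reference on orders of reductions --- to prove $T=0$. You avoid that lemma entirely: by first pinning $R$ into a specific component $aX+G_P^0$ of $G_P$ and subtracting $aP$ rather than multiplying by $n_P$, you land on a point $S\in G_P^0(K)$ whose reductions lie in $\mathbb Z(n_PP \bmod\mathfrak p)$, and since $n_PP$ is independent in $G_P^0$ (Lemma~\ref{dimGdR}), Theorem~\ref{banstronger} applies on the nose and returns $S\in\mathbb Z n_PP$ with no torsion ambiguity. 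This is arguably cleaner. The price is your first step: you need the quotient $G/G_P^0$ to again be a product of an abelian variety and a torus (fine, via Lemma~\ref{zerohom}), compatibility of reduction with the quotient map at almost all primes, and the fact that a non-zero $K$-point of such a group cannot reduce to the identity at all but finitely many primes. That last fact is standard (the locus where the corresponding section of the N\'eron/smooth model meets the zero section is closed, hence finite or everything) and the paper itself uses it implicitly in the finite-$\Lambda$ case, but it is nowhere proved in the paper, so you should supply a reference or a one-line proof.

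Two small points to patch. First, you only dispose of $P=0$; if $P$ is a non-zero torsion point, then $n_PP=0$ is not independent and $\{n_PP\}$ is not a basis of a free module, so Theorem~\ref{banstronger} does not apply as you invoke it --- but in that case $G_P^0$ is trivial, so your $S$ is automatically $0$ and $R=aP\in\Lambda$; say this explicitly (the paper handles all finite $\Lambda$ at once by a pigeonhole argument). Second, Lemma~\ref{cosetXmultiple} concerns primes of the extension $L$ where $X$ is defined, so in the step deducing $c_\mathfrak p\equiv a\pmod{n_P}$ you should pass to primes $\mathfrak q$ of $L$ above $\mathfrak p$ and note that excluding finitely many $\mathfrak q$ excludes only finitely many $\mathfrak p$; this is routine but worth a sentence.
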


\begin{lem}\label{westonkowalskilem}
Let $G$ be the product of an abelian variety and a torus defined over a number field $K$. Let $\Lambda$ be a cyclic infinite subgroup of $G(K)$. Let $T$ be a $K$-rational torsion point on $G$. Suppose that for all but finitely many primes $\mathfrak p$ of $K$ the point $(T \bmod\mathfrak p)$ belongs to $(\Lambda \bmod\mathfrak p)$. Then $T$ is zero.
\end{lem}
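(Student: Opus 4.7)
Let $P$ be a generator of $\Lambda$, so $P$ has infinite order; the strategy is to reduce to the connected subgroup $G_P^0$, where the refined Banaszak theorem (Theorem~\ref{banstronger}) applies. First I would show $T \in G_P(K)$: consider the image $\bar T$ of $T$ in the quotient algebraic group $(G/G_P)(K)$, which is torsion. The hypothesis gives $T \bmod \mathfrak{p} \in \Lambda \bmod \mathfrak{p} \subseteq G_P \bmod \mathfrak{p}$ for almost all $\mathfrak{p}$, so $\bar T \bmod \mathfrak{p} = 0$; since the reduction map is injective on the finite group $\langle \bar T \rangle$ for almost all $\mathfrak{p}$, we conclude $\bar T = 0$.

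Next I would exploit the component structure $G_P/G_P^0 \cong \mathbb{Z}/n_P \mathbb{Z}$. Let $k \in \mathbb{Z}/n_P\mathbb{Z}$ be the component class of $T$, pick an integer representative $c$, and set $Y := T - cP \in G_P^0(K)$. I claim $Y \bmod \mathfrak{p} \in \langle n_P P \rangle \bmod \mathfrak{p}$ for almost all $\mathfrak{p}$. Indeed, writing $T \bmod \mathfrak{p} = n_{\mathfrak{p}} P \bmod \mathfrak{p}$ from the hypothesis gives $Y \bmod \mathfrak{p} = (n_{\mathfrak{p}} - c) P \bmod \mathfrak{p}$; after passing to a finite extension $L/K$ where the component representative $X$ of $G_P$ is defined, Lemma~\ref{cosetXmultiple} ensures that for almost all primes of $L$ (hence for almost all $\mathfrak{p}$ of $K$) the component structure of $G_P$ is preserved under reduction. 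Since $Y$ lies in $G_P^0$, the component class $(n_{\mathfrak{p}} - c) \bmod n_P$ of $(n_{\mathfrak{p}} - c)P$ must vanish, forcing $n_P \mid (n_{\mathfrak{p}} - c)$ and hence $Y \bmod \mathfrak{p} \in \langle n_P P \rangle \bmod \mathfrak{p}$.

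Finally I would apply Theorem~\ref{banstronger} to the triple $(G_P^0, Y, \langle n_P P \rangle)$. By Lemma~\ref{dimGdR}, $n_P P$ is independent in $G_P^0$, so $\{n_P P\}$ is a basis of the free left $\End_K G_P^0$-submodule $\End_K G_P^0 \cdot n_P P$ of $G_P^0(K)$; in particular $\langle n_P P \rangle$ has a set of generators forming such a basis. Theorem~\ref{banstronger} then yields $Y \in \langle n_P P \rangle$, say $Y = m \, n_P P$, whence $T = (m n_P + c) P \in \Lambda$; since $T$ is torsion and $P$ has infinite order, $T = 0$. The main obstacle is the component-tracking step: one must carefully apply Lemma~\ref{cosetXmultiple} (after extending scalars so that $X$ is defined) and descend the conclusion from primes of $L$ to primes of $K$ to refine the containment from $\langle P \rangle \bmod \mathfrak{p}$ to $\langle n_P P \rangle \bmod \mathfrak{p}$.
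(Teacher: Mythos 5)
Your proof is correct, but it takes a genuinely different route from the paper's. The paper first reduces to the case where $T$ has $\ell$-power order and then works entirely with the decomposition $P=X+Z$ and the order-prescribing results of \cite{Peruccaorder}: it chooses primes where $Z$ (respectively $Z-T_0$, $Z-T_1$ for suitable auxiliary torsion points $T_0$, $T_1$) reduces to a point of order coprime to $\ell$, successively forcing $T$ to be a multiple of $X$, then to lie in $G_P^0$, and finally deriving a contradiction because $(n_PP\bmod\mathfrak q)$ can be made to have order coprime to $\ell$. You instead make the lemma a consequence of Theorem~\ref{banstronger}: you place $T$ in $G_P(K)$, subtract off the component class to get $Y=T-cP\in G_P^0(K)$, use Lemma~\ref{cosetXmultiple} (via $P=X+Z$ with $Z\in G_P^0$, which you should invoke explicitly to see that membership of $(n_{\mathfrak p}-c)P$ in $G_P^0\bmod\mathfrak p$ is governed by $(n_{\mathfrak p}-c)X$ alone) to upgrade the hypothesis to $Y\bmod\mathfrak p\in\langle n_PP\rangle\bmod\mathfrak p$, and then apply Theorem~\ref{banstronger} to the independent point $n_PP$ of $G_P^0$; this is legitimate since Theorem~\ref{banstronger} is proved without the lemma, and all the hypotheses (in particular that $\{n_PP\}$ is a basis of the free module $\End_K G_P^0\cdot n_PP$, by Lemma~\ref{dimGdR} and the characterization of independence) are verified. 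Your approach is shorter, avoids the $\ell$-primary case analysis and the auxiliary points $T_0$, $T_1$ entirely, and in fact proves more: the same argument applied to an arbitrary $K$-rational point $R$ in place of $T$ gives Theorem~\ref{westonkowalski} directly, bypassing the lemma altogether. What the paper's argument buys in exchange is independence from Section~\ref{sectionbanaszak}: it relies only on the preliminaries and on \cite{Peruccaorder}, whereas yours imports the full strength of the refined Banaszak theorem. The only points needing care in your write-up are the descent of the almost-all-primes statement from $L$ back to $K$ (routine, as you note, since the divisibility $n_P\mid n_{\mathfrak p}-c$ is a statement about integers) and the justification that reduction is injective on the torsion of the quotient $G/G_P$, which is standard for semiabelian varieties of good reduction.
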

\begin{proof}
Suppose that $T$ is non-zero. Then $T$ can be uniquely written as a sum of torsion points whose orders are prime powers. These torsion points are multiples of $T$. Consequently, we reduce at once to the case where the order of $T$ is the power of a prime number $\ell$.

Let $\Lambda=\mathbb Z P$ for a point $P$ of infinite order.
The algebraic subgroup $G_P$ of $G$ generated by $P$ has dimension at least $1$. In section~\ref{sectionpreliminaries} we saw the following: $P=X+Z$ for some point $Z$ in $G_P^0(\bar{K})$ and some torsion point $X$ in $G_P(\bar{K})$; the point $Z$ is independent in $G_P^0$; $n_PX$ is the least multiple of $X$ which belongs to $G_P^0$; $G_P^0$ is the product of an abelian variety and a torus defined over $K$. 

Let $c$ be the $\ell$-adic valuation of the order of $X$.
Let $L$ be a finite extension of $K$ where $X$, $Z$, $G[\ell^{2c}]$ are defined and such that $n_PX$ has $n_P$-roots in $G_P^0(L)$.
Notice that for all but finitely many primes $\mathfrak q$ of $L$ the point $(T \bmod\mathfrak q)$ belongs to $(\mathbb ZP \bmod\mathfrak q)$.

By \cite[Proposition 12]{Peruccaorder}, there exist infinitely many primes $\mathfrak q$ of $L$ such that the order of $(Z \bmod\mathfrak q)$ is coprime to $\ell$.
Then for infinitely many primes $\mathfrak q$ the point $(T \bmod\mathfrak q)$ lies in the finite group generated by $(X \bmod\mathfrak q)$.
We deduce that $T=aX$ for some non-zero integer $a$.

Let $T_0$ be a point in $G_P^0$ of order $\ell^{2c}$.
By \cite[Proposition 11]{Peruccaorder}, there exist infinitely many primes $\mathfrak q$ of $L$ such that the order of $(Z-T_0 \bmod\mathfrak q)$ is coprime to $\ell$.
We deduce that for infinitely many primes $\mathfrak q$ the point $(T \bmod\mathfrak q)$ lies in the finite group generated by $(T_0 + X \bmod\mathfrak q)$.
Then $T=b(T_0+X)$ for some non-zero integer $b$.

Since $aX=b(T_0+X)$ and because the order of $T_0$ is $\ell^{2c}$ we deduce that $v_\ell(b)\geq c$. Consequently, $T$ is the sum of $bT_0$ and a torsion point of order coprime to $\ell$. Then $T$ is a multiple of $T_0$ and in particular it belongs to $G_P^0$.

Let $T_1$ be a point in $G_P^0(L)$ such that $n_P T_1=-n_P X$.
By \cite[Proposition 11]{Peruccaorder}, there exist infinitely many primes $\mathfrak q$ of $L$ such that the order of $(Z-T_1 \bmod\mathfrak q)$ is coprime to $\ell$.
Up to discarding finitely many primes $\mathfrak q$, we may assume that $(T \bmod\mathfrak q)$ belongs to $(\mathbb ZP \bmod\mathfrak q)$ and that the order of $(T \bmod\mathfrak q)$ equals the order of $T$.
Up to discarding finitely many primes $\mathfrak q$, by Lemma~\ref{cosetXmultiple} we may assume that $(n_P X \bmod\mathfrak q)$ is the least multiple of $(X \bmod\mathfrak q)$ belonging to $(G_P^0 \bmod\mathfrak q)$.
Consequently, the intersection of $(G_P^0 \bmod\mathfrak q)$ and $(\mathbb ZP \bmod\mathfrak q)$ is $(\mathbb Zn_PP \bmod\mathfrak q)$. Then $(T \bmod\mathfrak q)$ belongs to $(\mathbb Zn_PP \bmod\mathfrak q)$.

Fix a prime $\mathfrak q$ as above and call $r$ the order of $(Z-T_1 \bmod\mathfrak q)$. We have
$$(r n_P P \bmod\mathfrak q)=(r n_P  Z+ r n_P X \bmod\mathfrak q)=(r n_PT_1+ r n_PX \bmod\mathfrak q)=(0 \bmod\mathfrak q).$$
Since $r$ is coprime to $\ell$, it follows that $(\mathbb Zn_PP \bmod\mathfrak q)$ has no $\ell$-torsion and in particular it does not contain  $(T \bmod\mathfrak q)$. We have a contradiction.
\end{proof}

\begin{proof}[Proof of Theorem~\ref{westonkowalski}]
If $\Lambda$ is finite then there exists an element $P'$ in $\Lambda$ such that for infinitely many primes $\mathfrak p$ of $K$ it is $(R \bmod\mathfrak p)=(P' \bmod\mathfrak p)$. Hence $R=P'$ and the statement is proven.
We may then assume that $\Lambda=\mathbb Z P$ for a point $P$ of infinite order.

We first prove that the statement holds in the case where the algebraic group $G_P$ generated by $P$ is connected.
In this case, $G_P$ is the product of an abelian variety and a torus (\cite[Proposition 5]{Peruccaorder}). By Lemma~\cite[Lemma 4.2]{Kowalskikummer}, we may assume that $G_P=G$. So we may assume that $P$ is independent in $G$ and we conclude by applying Theorem~\ref{banstronger}.

In general, call $n_P$ the number of connected components of $G_P$. Notice that the points $n_P P$ and $n_P R$ still satisfy the hypotheses of the theorem and that $G_{n_P P}$ is connected by Lemma~\ref{dimGdR}. 
Therefore we know (by the special case above) that $n_P R=g n_P P$ for some integer $g$.
Since $R$ and $P$ are rational points, we deduce that $R=gP+T$ for some rational torsion point $T$. 
Since $R+T$ belongs to $\Lambda$, for all but finitely many primes $\mathfrak p$ of $K$ the point $(T \bmod\mathfrak p)$ belongs to $(\Lambda \bmod\mathfrak p)$.
By applying Lemma~\ref{westonkowalskilem} we deduce that $T=0$ hence $R$ belongs to $\Lambda$.
\end{proof}

\section*{Acknowledgements}
I thank Emmanuel Kowalski and Ren\'e Schoof for helpful discussions.


\end{document}